\documentclass[a4paper,11pt]{article}
\usepackage[utf8x]{inputenc}

\usepackage{a4wide}
\usepackage{amssymb}
\usepackage{amsmath}
\usepackage{amsthm}
\usepackage[mathscr]{euscript}
\usepackage[normalem]{ulem}
\usepackage[colorlinks=true]{hyperref}
\usepackage[usenames,dvipsnames]{xcolor}
\usepackage{appendix}
\usepackage{tikz}
\usetikzlibrary{arrows,angles}
\usetikzlibrary{matrix}
\usetikzlibrary{decorations}
\usetikzlibrary{arrows,calc,shapes,decorations.pathreplacing}
\usepackage{tikz-cd}
\usepackage{todonotes}

\numberwithin{equation}{section}
\theoremstyle{definition}

\newtheorem{Definition}{Definition}[section]

\newtheorem{Example}[Definition]{Example}
\newtheorem{Conjecture}[Definition]{Conjecture}

\theoremstyle{plain}

\newtheorem{Proposition}[Definition]{Proposition}
\newtheorem{Lemma}[Definition]{Lemma}
\newtheorem{Corollary}[Definition]{Corollary}

\newcommand{\R}{\mathbb R}
\newcommand{\N}{\mathbb N}

\usepackage[xcolor]{changebar}
\cbcolor{blue}
\DeclareMathOperator{\Ric}{Ric}
\usepackage[inline]{enumitem}
\newcommand{\enumlabelformat}{\roman}
\newcommand{\enumlabelfont}[1]{#1}
\newlength{\thelabelsep}
\setlist{labelsep=\thelabelsep}
\setlist[enumerate,1]{font=\enumlabelfont,label=(\enumlabelformat*),leftmargin=2.5em}
\setlist[itemize]{leftmargin=2.5em,label=$-$}

\newcounter{inlineenum}
\renewcommand{\theinlineenum}{\enumlabelformat{inlineenum}}
\newcommand{\blue[1]}{{#1}}
\let\epsilon\varepsilon
\let\phi\varphi

\newcommand{\argam}[1]{{\color{green!70!black}{#1}}}

\makeatletter
\let\save@mathaccent\mathaccent
\newcommand*\if@single[3]{%
  \setbox0\hbox{${\mathaccent"0362{#1}}^H$}%
  \setbox2\hbox{${\mathaccent"0362{\kern0pt#1}}^H$}%
  \ifdim\ht0=\ht2 #3\else #2\fi
  }
\newcommand*\rel@kern[1]{\kern#1\dimexpr\macc@kerna}
\newcommand*\widebar[1]{\@ifnextchar^{{\wide@bar{#1}{0}}}{\wide@bar{#1}{1}}}
\newcommand*\wide@bar[2]{\if@single{#1}{\wide@bar@{#1}{#2}{1}}{\wide@bar@{#1}{#2}{2}}}
\newcommand*\wide@bar@[3]{%
  \begingroup
  \def\mathaccent##1##2{%
    \let\mathaccent\save@mathaccent
    \if#32 \let\macc@nucleus\first@char \fi
    \setbox\z@\hbox{$\macc@style{\macc@nucleus}_{}$}%
    \setbox\tw@\hbox{$\macc@style{\macc@nucleus}{}_{}$}%
    \dimen@\wd\tw@
    \advance\dimen@-\wd\z@
    \divide\dimen@ 3
    \@tempdima\wd\tw@
    \advance\@tempdima-\scriptspace
    \divide\@tempdima 10
    \advance\dimen@-\@tempdima
    \ifdim\dimen@>\z@ \dimen@0pt\fi
    \rel@kern{0.6}\kern-\dimen@
    \if#31
      \overline{\rel@kern{-0.6}\kern\dimen@\macc@nucleus\rel@kern{0.4}\kern\dimen@}%
      \advance\dimen@0.4\dimexpr\macc@kerna
      \let\final@kern#2%
      \ifdim\dimen@<\z@ \let\final@kern1\fi
      \if\final@kern1 \kern-\dimen@\fi
    \else
      \overline{\rel@kern{-0.6}\kern\dimen@#1}%
    \fi
  }%
  \macc@depth\@ne
  \let\math@bgroup\@empty \let\math@egroup\macc@set@skewchar
  \mathsurround\z@ \frozen@everymath{\mathgroup\macc@group\relax}%
  \macc@set@skewchar\relax
  \let\mathaccentV\macc@nested@a
  \if#31
    \macc@nested@a\relax111{#1}%
  \else
    \def\gobble@till@marker##1\endmarker{}%
    \futurelet\first@char\gobble@till@marker#1\endmarker
    \ifcat\noexpand\first@char A\else
      \def\first@char{}%
    \fi
    \macc@nested@a\relax111{\first@char}%
  \fi
  \endgroup
}
\makeatother
\title{Failure of local equi-Lipschitzness for families of Lorentz distances to Cauchy surface foliations}

\author{Gregory J. Galloway\footnote{Department of Mathematics, University of Miami, Coral Gables, FL, USA.\\galloway@math.miami.edu}\\Robert J. McCann\footnote{Departments of Mathematics and of Economics, University of Toronto, Toronto, ON, Canada.\\ mccann@math.toronto.edu}\\Argam Ohanyan\footnote{Department of Mathematics, University of Toronto, Toronto, ON, Canada.\\argam.ohanyan@utoronto.ca}}
\begin{document}

\date{\today}

%\date{Received: date /Accepted: date}

\maketitle

\begin{abstract}
The families of Lorentzian distance functions to and from points along a complete timelike line in a spacetime are known to be locally equi-Lipschitz continuous in a neighborhood of the line. This is an essential component in the proof of the classical Lorentzian splitting theorems. We show that this property fails in general for families of Lorentzian distances to and from the level sets of a given Cauchy temporal function. Moreover, we formulate conjectures based on the existence of Cauchy temporal functions in cosmological and timelike geodesically complete spacetimes such that the Lorentz distances to its level sets are equi-Lipschitz, which are equivalent to Bartnik's splitting conjecture.
\vspace{1em}

\noindent
\emph{Keywords: } Lorentz distance, Cauchy temporal function, equi-Lipschitz, Bartnik's cosmological splitting conjecture
\medskip

\noindent
\emph{MSC2020:} 53B30, 53C50, 53C24

\end{abstract}
%\newpage
\tableofcontents
%\newpage

\section{Introduction}
\label{section: introduction}

The Lorentzian splitting theorems \cite{Eschenburg1988, Galloway1989, Newman1990} are by now a classical topic in Lorentzian geometry and mathematical general relativity. They assert that a timelike geodesically complete or globally hyperbolic spacetime $(M,g)$ which contains a complete timelike line (i.e., a proper time-maximizing geodesic $\gamma: \R \to M$) and satisfies the strong energy condition (i.e., $\Ric(v,v) \geq 0$ for all timelike $v \in TM$) splits isometrically as a product spacetime $(\R \times S, dt^2 - \tilde g)$, where $(S,\tilde g)$ is a complete Riemannian manifold of nonnegative Ricci curvature. Recently, new methods were developed in \cite{Quintetsmooth} based on the ellipticity of the $p$-d'Alembertian studied in \cite{octet}, which lead to a vastly improved Lorentz--Finsler splitting theorem \cite{caponio2024splitting} as well as a splitting for spacetimes whose metric and volume weight can be of $C^1$ regularity \cite{quintetnonsmooth}.

A central ingredient in the proof is the following: Given a complete timelike line $\gamma: \R \to M$ in a timelike geodesically complete or globally hyperbolic spacetime $(M,g)$, there exists a neighborhood $U$ of $\gamma(\R)$ such that the functions
\begin{equation}
    \{\ell(\cdot, \gamma(t))\}_{t > 0}, \quad \{\ell(\gamma(-t),\cdot)\}_{t > 0},
\end{equation}
are {locally} equi-Lipschitz continuous on $U$.

While this result already appears in Eschenburg \cite{Eschenburg1988}, it is best understood in terms of the \emph{(generalized) timelike co-ray condition} introduced in Galloway and Horta \cite{GallowayHorta}, which, if it holds at a point $q$, states that curves $\alpha:[0,\infty) \to M$ obtained as subsequential limits of (almost) maximizers connecting $q$ to $\gamma(t_n)$ (or $\gamma(-t_n)$ to $q$), are timelike. One can show that this is an open condition and on $\gamma$ itself all such limits only produce parts of the timelike line $\gamma$. Thus the condition holds in a neighborhood of $\gamma$.

In situations where one would like to prove a splitting of spacetime, but there is no complete timelike line present a priori, such as in the setting of Bartnik's \cite{Bartnik1988} cosmological splitting conjecture, one might be tempted to consider families of Lorentz distances $\{\ell(\cdot,\Sigma_t)\}_{t > 0}$ as well as $\{\ell(\Sigma_{-t},\cdot)\}_{t > 0}$, where $\Sigma_s :=\{\tau = s\}$ are the level sets of a given Cauchy temporal function $\tau \in C^\infty(M)$ and are thus smooth spacelike Cauchy hypersurfaces. In this note, we show that there is in general an issue with such an approach, even for rather nice (i.e., steep and $h$-steep with respect to a background complete Riemann metric $h$) Cauchy temporal functions, by way of two counterexamples.\footnote{Some partial efforts to deal with these issues for a specific class of Cauchy surfaces have been made in \cite{GVAchronal, GVHausdorff}, whereby specific splitting results are obtained, subject to certain additional conditions.} These examples also contradict Lemma 3.2 in \cite{ZhuWuCui} and subsequently the results therein which assert the existence of global viscosity solutions to the Lorentzian eikonal equation $g(\nabla u,\nabla u) = 1$. 

It is worth noting that the existence of a Cauchy temporal function for which the aforementioned families are locally equi-Lipschitz is equivalent to Bartnik's splitting conjecture.

\subsection{Notation and conventions}
\label{subsection: notation conventions}

Lorentzian metrics have signature $(+,-,\dots,-)$, so
$g(v,v) > 0$ means $v$ is timelike, $g(v,v) = 0$ and $v \neq 0$ means $v$ is null, and $g(v,v) < 0$ or $v = 0$ means $v$ is spacelike.
Spacetimes are connected, time oriented Lorentzian manifolds 
with smooth metric tensors $g_{ij} \in C^\infty$ 
and have dimension $\dim M =:n \geq 2$. 
We fix a background complete Riemannian metric $h$ throughout. If $X$ is the time orientation vector field and $v$ is a causal vector, then $g(v,X) > 0$ means $v$ is future, $g(v,X) < 0$ means $v$ is past. If $v$ is causal, we write $|v|_g:=\sqrt{g(v,v)}$ for its Lorentzian norm. A spacelike hypersurface is a hypersurface such that the restriction of $g$ on it is negative definite. The function $\ell:M^2 \to \{-\infty\} \cup [0,\infty)$ denotes the (extended) time separation function, by convention $\ell(x,y) = -\infty$ if $x \not \leq y$. Moreover, we write $\ell(A,B):=\sup_{x \in A, y\in B} \ell(x,y)$. A function $\tau \in C^\infty(M)$ is called \emph{temporal} if $\nabla \tau$ is future directed timelike. A temporal function $\tau$ is \emph{Cauchy} if each of its level sets $\{\tau = s\}$, $s \in \R$, is a Cauchy hypersurface.

\section{Busemann functions associated to timelike lines}
\label{Section: Proof of Bartnik}

Let $(M,g)$ be globally hyperbolic or timelike geodesically complete, and let $\gamma:\R \to M$ be a complete timelike line. Let us briefly describe how to obtain the local equi-Lipschitz regularity of the family $\{\ell(\cdot,\gamma(t))\}_{t > 0}$ in a neighborhood of $\gamma(\R)$, following Galloway--Horta \cite{GallowayHorta}.

Given a point $q \in I(\gamma):=I^+(\gamma) \cap I^-(\gamma)$, a \emph{(generalized) causal co-ray} is an inextendible causal curve $\alpha$ starting at $ q$ which is obtained as a subsequential limit of (almost) maximizing timelike curves $\alpha_n$ which connect $q$ to $\gamma(t_n)$, where $t_n \to +\infty$. Such a (generalized) causal co-ray is always causal and maximizing. We say the \emph{(generalized) timelike co-ray condition} holds at $q$ if every (generalized) causal co-ray starting at $q$ is timelike. The essential result now is the following

\begin{Proposition}[\cite{GallowayHorta}, Prop.\ 5.1; Cor.\ 5.2]
    \begin{enumerate}
        \item[]
        \item The (generalized) timelike co-ray condition is an open condition.
        \item Every (generalized) causal co-ray starting at $\gamma(t)$, for any $t$, corresponds to $\gamma|_{[t,\infty)}$. In particular, the (generalized) timelike co-ray condition holds at every point on $\gamma$.
    \end{enumerate}
\end{Proposition}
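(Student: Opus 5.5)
For (i), I will argue by contradiction using a diagonal limit argument. Suppose the timelike co-ray condition holds at $q$ but fails at points $q_k \to q$. Then for each $k$ there is a null (non-timelike causal, hence, being maximizing, a null geodesic ray) generalized co-ray $\alpha_k$ from $q_k$. This ray is the limit of almost maximizing timelike curves $\alpha_{k,n}$ from $q_k$ to $\gamma(t_{k,n})$ with $t_{k,n}\to\infty$.

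I will pick a diagonal sequence $\beta_k := \alpha_{k,n_k}$ with $t_{k,n_k}\to\infty$, chosen so that $\beta_k$ is $h$-uniformly close to $\alpha_k$ on compact parameter sets. Then I apply the limit curve theorem. Global hyperbolicity, or the fact that $q\in I(\gamma)$ together with the timelike geodesic completeness setting used in \cite{GallowayHorta}, provides the needed non-imprisonment and compactness of the relevant causal diamonds. This gives a subsequence of $\beta_k$ converging to an inextendible causal curve $\alpha$ from $q$, which is by definition a generalized co-ray at $q$. The same limit arises as a limit of the $\alpha_k$, and a limit of null geodesics is a null geodesic (geodesic initial data converge in $TM$, and nullity is closed). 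So $\alpha$ is null, contradicting the timelike co-ray condition at $q$.

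The delicate point, and I expect the main obstacle, is making "almost maximizing" pass to the limit. I would define it as $L(\alpha_{k,n}) \ge \ell(q_k,\gamma(t_{k,n})) - \epsilon_{k,n}$ with $\epsilon_{k,n}\to 0$. Then I would use upper semicontinuity of length under $C^0$ convergence, together with the reverse triangle inequality, to see that limits are maximizing between any two of their points. That maximality is exactly what lets me conclude that the limit, and each $\alpha_k$, is an unbroken geodesic.

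For (ii), let $\alpha$ be a co-ray at $\gamma(t)$ obtained from maximizers $\alpha_n$ from $\gamma(t)$ to $\gamma(t_n)$. The key step is to compare with $\gamma$ itself. Since $\gamma$ is a line, $\ell(\gamma(t),\gamma(t_n)) = t_n - t$, and $\gamma|_{[t,t_n]}$ is a maximizer. Take any point $\alpha(s)$ on the limit and points $\alpha_n(s_n)\to\alpha(s)$. Applying the reverse triangle inequality through $\alpha_n(s_n)$ and passing to the limit with lower semicontinuity of $\ell$ on timelike pairs and upper semicontinuity of length shows that the curve following $\alpha$ from $\gamma(t)$ to $\alpha(s)$ and then continuing along the limit realizes the distance. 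In particular, $\alpha$ together with the past part $\gamma|_{(-\infty,t]}$ is maximizing. Concretely, $\ell(\gamma(t-1),\alpha(s)) = 1 + \ell(\gamma(t),\alpha(s))$. A maximizing causal curve consisting of two geodesic segments cannot have a corner, so $\gamma|_{[t-1,t]}$ followed by $\alpha$ is an unbroken geodesic. Hence $\alpha'(0)$ is proportional to $\gamma'(t)$, and by uniqueness of geodesics $\alpha = \gamma|_{[t,\infty)}$ up to parametrization. In particular $\alpha$ is timelike, so the timelike co-ray condition holds on $\gamma$. Combined with (i), this gives a neighborhood of $\gamma(\R)$ where it holds.
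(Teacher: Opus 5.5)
Your two-step strategy is the standard Galloway--Horta argument that the paper cites (the paper itself only sketches it in one sentence): a diagonal limit for openness, and for points on the line a no-corner argument using a point of $\gamma$ before $\gamma(t)$. Your treatment of almost-maximality via upper semicontinuity of $L$ and lower semicontinuity of $\ell$ is fine.

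The gap is that (i) and (ii) are proved for two different notions of co-ray. In (i) your diagonal curves $\beta_k$ start at $q_k$, not at $q$. Their limit is therefore a co-ray at $q$ only if co-rays at $q$ may arise from (almost) maximizers with base points $p_k \to q$; this moving-base-point notion is what ``generalized'' means in Galloway--Horta. Under the fixed-base-point wording you cannot repair this by prepending a curve from $q$ to $q_k$, since $q_k$ need not lie in $J^+(q)$. But in (ii) you only treat curves emanating from $\gamma(t)$ itself, and for exact maximizers that case is nearly vacuous: the no-corner argument already forces any maximizer from $\gamma(t)$ to $\gamma(t_n)$ to be $\gamma|_{[t,t_n]}$. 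What is actually needed, so that (i) and (ii) concern the same condition, is that co-rays at $\gamma(t)$ obtained from base points $p_n \to \gamma(t)$ are $\gamma|_{[t,\infty)}$. As written, your (ii) does not cover this.

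The missing ingredient is a lower bound on $\ell(p_n,\gamma(t_n))$. For any $t'>t$, $I^-(\gamma(t'))$ is an open neighbourhood of $\gamma(t)$, so $p_n \ll \gamma(t')$ for large $n$, hence $\ell(p_n,\gamma(t_n)) \ge t_n - t'$. Take $L(\alpha_n) \ge \ell(p_n,\gamma(t_n)) - \epsilon_n$ and $\alpha_n(s_n) \to \alpha(s)$. Then use the line property anchored at $\gamma(t-1)$, not at $\gamma(t)$ as you wrote:
\[ t_n - t + 1 = \ell(\gamma(t-1),\gamma(t_n)) \ge \ell(\gamma(t-1),\alpha_n(s_n)) + t_n - t' - \epsilon_n - L(\alpha_n|_{[0,s_n]}). \]
Apply lower semicontinuity of $\ell$ on the left and upper semicontinuity of $L$ on the right, then let $t' \downarrow t$. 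This gives $\ell(\gamma(t-1),\alpha(s)) \le 1 + L(\alpha|_{[0,s]})$. The reverse inequality is the reverse triangle inequality, and your no-corner conclusion then goes through. Note also that your sentence ``in particular, $\alpha$ together with $\gamma|_{(-\infty,t]}$ is maximizing'' does not follow from maximality of $\alpha$ alone; this inequality anchored at $\gamma(t-1)$ is what proves it.
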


The above result implies that the (generalized) timelike co-ray condition holds in a neighborhood $U$ of $\gamma(\R)$. In particular, it is now easily seen that $\{\ell(\cdot,\gamma(t)\}_{t > 0}$ is equi-Lipschitz on this neighborhood. If there existed $q \in U$ such that this family was not equi-Lipschitz in any neighborhood of $q$, one could find (almost) maximizing timelike geodesics in $g$-arclength parametrization from $q_n $ to $\gamma(t_n)$, where $q_n \to q$ and $t_n \to \infty$, such that $|\gamma_n'(0)|_h \to \infty$. In this case, it is easily seen that the $\gamma_n$ must converge subsequentially to a null ray at $q$, thus contradicting the (generalized) timelike co-ray condition.

In fact, the family $\{\ell(\cdot,\gamma(t)\}_{t > 0}$ is not only locally equi-Lipschitz but also locally equi-semiconvex on $U$ \cite[Prop.\ 5]{Quintetsmooth}; (or equi-superdifferentiable \cite[Prop.\ 13]{quintetnonsmooth} in less smooth settings). These ingredients are essential in the use of $p$-d'Alembertian techniques, where we recall that $\square_p f:=-\mathrm{div}( |\nabla f|^{p-2}\nabla f)$, $0 \neq p < 1$. In \cite{Quintetsmooth}, we give a streamlined argument (the result was also shown in \cite{octet}) for the fact that the standard smooth d'Alembertian comparison (which is a consequence of the strong energy condition, i.e., $\Ric(v,v) \geq 0$ for all timelike vectors $v$) for a Lorentz distance $f:=\ell(\cdot,y)$ on $I^-(y) \setminus \mathrm{Cut}^-(y)$ extends to a weak comparison inequality across the cut locus, in the sense that
\begin{equation}
    \int_M \left( \frac{(n-1)\phi}{f} + g(\nabla \phi, |\nabla f|^{p-2} \nabla f) \right) \, \mathrm{dvol}_g\geq 0
\end{equation}
for all $0 \leq \phi \in C^1_c(I^-(y))$. Applying this result now to the sequence of approximate future Busemann functions
\begin{equation}
    b_t^+(x):=\ell(\gamma(0),\gamma(t)) - \ell(x,\gamma(t))
\end{equation}
and keeping in mind the local equi-Lipschitzness and local equi-semiconvexity of the family $\{\ell(\cdot,\gamma(t))\}_{t > 0}$ in the neighborhood of the line where the (generalized) timelike co-ray condition holds, we may pass to the limit to obtain
\begin{equation}
    \int_M g(\nabla \phi, |\nabla b^+|_g^{p-2} \nabla b^+) \,  \mathrm{dvol}_g \leq 0,
\end{equation}
where $b^+:=\lim_{t \to \infty} b_t^+$ is the \emph{future Busemann function} associated with $\gamma$. This can be understood as the weak form of the inequality $\square_p b^+ \leq 0$. Similarly, $\square_p b^- \geq 0$, where $b^-:=\lim_{t \to \infty} b_t^-$ is the \emph{past Busemann function} associated with $\gamma$ and
\begin{equation}
    b_t^-(x):=\ell(\gamma(-t),x) - \ell(\gamma(-t),\gamma(0)).
\end{equation}
Since $b^+ \geq b^-$ and $b^+(\gamma(t)) = b^-(\gamma(t))$ for every $t$ due to the fact that $\gamma$ is a maximizing timelike line, one can obtain $b^+ = b^-$ on a neighborhood of $\gamma(\R)$ by the maximum principle for quasilinear elliptic operators, cf.\ \cite[Prop.\ 9]{Quintetsmooth}. An application of a nonlinear Bochner-type identity in conjunction with the strong energy condition then yields $\mathrm{Hess}_g(b^+) = 0$ and in particular that $b^+$ is smooth near $\gamma$, see \cite[Cor.\ 14]{Quintetsmooth}. From here, the remaining arguments to obtain the Lorentzian splitting theorem follow the classical sources \cite{Eschenburg1988, Galloway1989, Newman1990, GallowayHorta}.

One may be tempted to follow this line of thought using Busemann-type functions associated to Cauchy surface foliations, but, as  the next section shows, the equi-Lipschitz property of Lorentz distances fails in this case in general, thus the analogous arguments do not go through due to lack of regularity.

\section{Busemann type functions associated to a Cauchy surface foliation}

Let $(M,g)$ be a globally hyperbolic spacetime. Let $\tau \in C^\infty(M)$ be a given steep and $h$-steep temporal function, i.e., 
\begin{equation}
\label{eq: steep and h steep}
    d\tau(V) \geq \max(|V|_h, |V|_g).
\end{equation}
Such a function $\tau$ is automatically a Cauchy temporal function. Bernard--Suhr \cite{BernardSuhr} show that any globally hyperbolic spacetime possesses such a steep and $h$-steep Cauchy temporal function. Now, \cite[Lemma 3.2]{ZhuWuCui} (and subsequent results) assert the local equi-Lipschitzness of the families of Lorentz distances $\{\ell(\cdot,\Sigma_t)\}_{t > 0}$ as well as $\{\ell(\Sigma_{-t},\cdot)\}_{t > 0}$, where $\Sigma_s:=\{\tau = s\}$. The aforementioned reference uses this in an essential way in their construction of global viscosity solutions to the Lorentzian eikonal equation $g(\nabla u,\nabla u) = 1$. However, we show that it does not hold in that generality without further assumptions. In fact, one can even find a temporal function which is steep with respect to the background Euclidean metric on $2$-dimensional Minkowski spacetime which contradicts the claim. Before coming to it, we note the following elementary observation:

\begin{Lemma}
\label{Lemma: blowup of initial tangents}
The family $\{\ell(\cdot,\Sigma_t)\}_{t > 0}$ is locally equi-Lipschitz on $M$ if and only if $M$ is covered by open sets $U$ such that for each $q \in U$, the $h$-norms of the initial tangents of $g$-unit speed maximizing timelike geodesics $\gamma_{(t)}$ from $q$ to $\Sigma_t$ satisfy $|\gamma'_{(t)}(0)|_h \leq C$ for some $C > 0$ which depends on $U$ but is independent of $t$, and $t$ is large enough so that $\Sigma_t \subseteq I^+(q)$ for every $q \in U$.
\end{Lemma}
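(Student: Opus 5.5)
We prove both directions through the first variation of $f_t:=\ell(\cdot,\Sigma_t)$ on $I^-(\Sigma_t)$. Fix $U$ with compact closure and $t$ large enough that $\Sigma_t\subseteq I^+(q)$ for all $q\in \overline U$. Global hyperbolicity makes $J^+(q)\cap J^-(\Sigma_t)$ compact. Hence $f_t$ is finite and attained by a maximizing timelike geodesic $\gamma_{(t)}$ from $q$ to $\Sigma_t$, meeting $\Sigma_t$ orthogonally. The key identity is that $-\gamma'_{(t)}(0)$, for $g$-unit speed, is a supergradient of $f_t$ at $q$. That is, for $x$ near $q$ one has
\begin{equation*}
 f_t(x)\geq f_t(q) - g(\gamma'_{(t)}(0),\exp_q^{-1}x) - o(d_h(x,q)).
\end{equation*}
To see this, concatenate a short geodesic from $x$ to a point $\gamma_{(t)}(\epsilon)$ with the rest of $\gamma_{(t)}$ and apply the reverse triangle inequality. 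The resulting lower support function is smooth with gradient $-\gamma'_{(t)}(0)$ at $q$. Where $f_t$ is differentiable, which is a.e. since $f_t$ is locally semiconcave away from the focal set, $\nabla f_t(q)=-\gamma'_{(t)}(0)$. This holds because the maximizer is then unique.

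\emph{($\Leftarrow$)} Suppose $|\gamma'_{(t)}(0)|_h\le C$ on $U$ for all large $t$. Then every supergradient produced above has $h$-norm at most $C$. Moreover, the error in the support inequality is uniform: the remainder is controlled by curvature bounds on a compact neighbourhood and by the bound $C$ on the initial velocity. So for $x,y$ in a small $h$-convex ball $B\subseteq U$ we get $f_t(y)-f_t(x)\le C' d_h(x,y)$. To obtain this, apply the one-sided inequality at points along the segment and integrate, which is the standard argument that semiconcave functions with bounded supergradients are Lipschitz. Swapping $x$ and $y$ gives equi-Lipschitzness on $B$, uniformly in $t$. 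Covering $M$ by such $B$ finishes this direction.

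\emph{($\Rightarrow$)} Suppose $f_t$ is $L$-Lipschitz with respect to $h$ on a neighbourhood $V$ of $q$, uniformly in $t$. We argue by contradiction. Take $q_n\to q$ and maximizers $\gamma_n$ from $q_n$ to $\Sigma_{t_n}$ with $|\gamma_n'(0)|_h\to\infty$. The normalized directions $v_n:=\gamma_n'(0)/|\gamma_n'(0)|_h$ subconverge to a future null vector $v$ at $q$. Now move $q_n$ slightly backward, in the past direction $-w$ for a fixed timelike $w$ with $g(v,w)>0$. The supergradient inequality then gives
\begin{equation*}
f_t(q_n - s w)\ge f_t(q_n)+ s\,g(\gamma_n'(0),w) - o(s).
\end{equation*}
Here $g(\gamma_n'(0),w)=|\gamma_n'(0)|_h\, g(v_n,w)\to\infty$. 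This contradicts the Lipschitz bound $L s$ once the $o(s)$ term is uniform for small fixed $s$.

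The main obstacle is exactly this uniformity: the $o(s)$ remainder must not degenerate as the initial velocity blows up. To avoid it, I would instead use a direct reverse-triangle comparison at finite scale $s$. The point $\gamma_n(\delta)$ lies on $\gamma_n$, and $q_n-sw$ is joined to it by a timelike curve of length at least roughly $\delta + s\,g(\gamma_n'(0),w)/|\cdots|$. This comes from explicit Minkowskian estimates in a normal chart around $q$, applied with $\delta$ chosen comparable to $s$. That yields $f_{t_n}(q_n-sw)-f_{t_n}(q_n)\ge c\,s\,|\gamma'_n(0)|_h^{1/2}$ or a similar unbounded quantity, contradicting equi-Lipschitzness.
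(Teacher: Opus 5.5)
The gap is in your ($\Rightarrow$) direction. You drop your first argument because the $o(s)$ remainder is not uniform in $n$. You replace it by a finite-scale Minkowskian comparison that is only asserted (``or a similar unbounded quantity''), and as stated it does not work either. With $\delta\sim s$ fixed and $|\gamma_n'(0)|_h\to\infty$, the point $\gamma_n(\delta)$ runs off along the limiting null direction and leaves any fixed normal chart about $q$, so the same non-uniformity comes back.

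What you missed is that no uniformity in $n$ is needed, because the Lipschitz constant $L$ is the same at every point of $V$ and for every $t$. For each \emph{fixed} $n$, combine
\[
s\,g(\gamma_n'(0),w)-o_n(s)\;\le\; f_{t_n}(q_n-sw)-f_{t_n}(q_n)\;\le\; L\,d_h(q_n-sw,q_n)= L\,s\,|w|_h+o(s),
\]
then divide by $s$ and let $s\to 0$. This gives $g(\gamma_n'(0),w)\le L|w|_h$ for every $n$, which contradicts $g(\gamma_n'(0),w)=|\gamma_n'(0)|_h\,g(v_n,w)\to\infty$.

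Even more directly, no sequence is needed. Suppose $\phi$ is differentiable at $q'$, $\phi\le f_t$ near $q'$ with equality at $q'$, and $f_t$ is $L$-Lipschitz there. Then $|d\phi(q')(e)|\le L|e|_h$ for all $e$. Since $d\phi(q')=-g(\gamma'_{(t)}(0),\cdot)$ and $g$ is nondegenerate, this bounds $|\gamma'_{(t)}(0)|_h$ by a constant depending only on $L$ and a compact subset of $V$, uniformly in $t$.

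Your ($\Leftarrow$) direction is correct. Once $|\gamma'_{(t)}(0)|_h\le C$, the unit initial directions lie in a compact subset of the open future unit timelike bundle over $\overline U$. So a single $\epsilon$ works for all large $t$, since $\ell(\cdot,\Sigma_t)$ is nondecreasing in $t$ and positive on $\overline U$. That $\epsilon$ gives support functions with uniform $C^2$ bounds on balls of uniform radius, and the two-sided Lipschitz estimate follows.

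There is one labeling error. A smooth function touching $f_t$ from \emph{below} makes $-\gamma'_{(t)}(0)$ a \emph{sub}gradient and $f_t$ locally \emph{semiconvex}, and this holds on all of $I^-(\Sigma_t)$, not only off the focal set. Your inequalities have the right direction, so nothing breaks.

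For comparison, the paper gives no proof of this lemma and calls it an elementary observation. The only related reasoning is the Section 2 sketch that failure of equi-Lipschitzness produces maximizers with $|\gamma_n'(0)|_h\to\infty$, which is your ($\Leftarrow$) in contrapositive form. Your support-function argument, with the fix above, is a legitimate way to fill it in.
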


\begin{Example}
We give a counterexample to 
\cite[Lemma 3.2]{ZhuWuCui}. Consider Minkowski space $\R^{1,1}$ with metric $dt^2 - dx^2$. If we transform to null coordinates $(u,v)$ which are defined via
\begin{equation}
    u = \frac{t - x}{\sqrt{2}}, \quad v = \frac{t + x}{\sqrt{2}},
\end{equation}
the Minkowski metric is $g = 2dudv$ in these coordinates.
A vector $X = A \partial_u + B \partial_v$ is future causal if and only if $A,B \geq 0$ and $\max(A,B) > 0$, since $g(X,X) = 2AB$. We write $h = du^2 + dv^2$ for the standard Euclidean metric in these coordinates. Define now
\begin{equation}
    \tau(u,v):=F(u) + v,
\end{equation}
where $F$ is a smooth function satisfying $F'(u) \geq 1$ for all $u$ (we will specify a choice of $F$ further below). We claim that $\tau$ is a steep and $h$-steep temporal function: Indeed, $d\tau = F'(u) du + dv$, thus $\nabla \tau = {\partial_u} + F'(u) \partial_v$, so that $g(\nabla \tau, \nabla \tau) = 2 F'(u) \geq 2$, so $\tau$ is steep temporal. To see that it is $h$-steep, take any future causal $X$ as above, then
\begin{equation}
\label{eq: tau hsteep}
    d\tau (X) = F'(u) du(X) + dv(X) = F'(u) A + B \geq A + B \geq \sqrt{A^2 + B^2} = |X|_h.
\end{equation}
All in all, $\tau$ is a steep and $h$-steep (thus Cauchy) temporal function, cf.\ \eqref{eq: steep and h steep}. Consider now the level set $\Sigma_s = \{(u,v) : F(u) + v = s\}$. Let us calculate the intersection of $J^+(0)$ with $\Sigma_s$. Take any straight line $\gamma(t) = (tu,tv)$ with $(u,v) \in \Sigma_s$, for $\gamma$ to be future causal we must have $u,v \geq 0$, but $v = s - F(u)$, so that $F(u) \leq s$, as well as $u \geq 0$. By monotonicity, there is a unique $u_{max}$ such that $F(u_{max}) = s$, and $F(u) \leq s$ for all $u \in [0,u_{max}]$. All in all,
\begin{equation}
    J^+(0) \cap \Sigma_s = \{(u,v) : u \in [0,u_{max}], 0 \leq v = s - F(u)\},
\end{equation}
which is compact. Take now a maximizing causal line from $\gamma(t) = (tu,tv) =(tu,t(s - F(u)))$ connecting the origin to some point on $\Sigma_s$, we want to determine at which point on $\Sigma_s$ the time separation to $\Sigma_s$ from the origin is maximized. To this end, note that
\begin{equation}
    \frac12\ell((0,0),(u,v))^2 = uv = u(s - F(u)) =:f(u).
\end{equation}
Let us now make a specific choice $F(u) = u + u^3/3$, which indeed satisfies $F'(u) = 1 + u^2 \geq 1$ as required. Moreover, $F''(u) = 2u$. We claim that with this choice of $F$, $f(u)$ is strictly concave: indeed
\begin{align}
    f'(u) &= s - F(u) - u F'(u),\\
    f''(u) &= - 2 F'(u) -  u F''(u) = -  (2 + 2u^2 + 2u^2) < 0.
\end{align}
Let $s$ be large enough so that $s > F(0)$. Also, note that $f'(u_{max}) = s - F(u_{max}) -  u_{max} F'(u_{max}) = - u_{max} F'(u_{max}) < 0.$ Thus, $f'(u) = 0$ uniquely at some $u_s \in (0,u_{max})$, where $u_s$ is implicitly given by
\begin{equation}
    s = F(u_s) + u_sF'(u_s)
\end{equation}
and the pair $(u_s, s - F(u_s) = v_s)$ is the point on $\Sigma_s$ at which a maximizer from the origin to $\Sigma_s$ lands. The maximizing segment is precisely $ t \in [0,1] \mapsto \gamma(t) = (tu_s,tv_s)$. Note that, using the explicit form of $F$ and the fact that $f'(u_s) = 0$, we get
\begin{equation}
    v_s = s - F(u_s) = u_s F'(u_s) = u_s (1+ u_s^2).
\end{equation}
Thus,
\begin{equation}
    \gamma'(0) = u_s \partial_u + u_s (1 + u_s^2) \partial_v.
\end{equation}
Hence
\begin{equation}
    g(\gamma'(0),\gamma'(0)) = 2u_s^2 (1 + u_s^2).
\end{equation}
If we write $W_s:= \gamma'(0)/|\gamma'(0)|_g$, then
\begin{equation}
    W_s = \frac{1}{\sqrt{2(1+u_s^2)}} \partial_u + \sqrt{\frac{1 + u_s^2}{2}} \partial_v.
\end{equation}
Then
\begin{equation}
    h(W_s,W_s) = \frac{1}{2 (1 + u_s^2)} + \frac{1 + u_s^2}{2}.
\end{equation}
Clearly, the above goes to $+\infty$ as $u_s \to +\infty$, or equivalently, as  $s \to +\infty$ since
\begin{equation}
    s = F(u_s) + u_s F'(u_s) =  u_s + u_s^3/3 + u_s(1 + u_s^2) = 2 u_s +  4 u_s^3/3.
\end{equation}
\end{Example}

The result continues to be false even if one assumes the Cauchy surfaces to be compact. Before coming to this, we observe that the equi-Lipschitzness already forces the existence of timelike lines in this setting.

\begin{Lemma}
\label{L:equi-Lipschitz implies splitting}
    Let $(M,g)$ be a globally hyperbolic spacetime with compact Cauchy surfaces. Let $\tau \in C^\infty(M)$ be a Cauchy temporal function with level sets $\Sigma_s:=\{\tau = s\}$. If either of the families $\{\ell(\cdot,\Sigma_t)\}_{t > 0}$ or $\{\ell(\Sigma_{-t},\cdot)\}_{t > 0}$ is locally equi-Lipschitz, then there exists a (not necessarily complete) timelike line in $M$.
\end{Lemma}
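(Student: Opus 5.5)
We treat the family $\{\ell(\cdot,\Sigma_t)\}_{t>0}$; the past family is handled the same way after reversing the time orientation. The idea is to fix a point, take maximizers from it to $\Sigma_{t_n}$ with $t_n \to \infty$, and extract a limit ray. Local equi-Lipschitzness, via Lemma~\ref{Lemma: blowup of initial tangents}, will keep that limit timelike. Then we do the same for a sequence of base points running to the past and extract a line.

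\textbf{Step 1: a timelike ray from each point.} Fix $q \in M$. Since the Cauchy surfaces are compact, $\Sigma_t \subseteq I^+(q)$ for $t$ large. By global hyperbolicity and compactness of $\Sigma_t$, there is a maximizing timelike geodesic $\gamma_{(t)}$ from $q$ to $\Sigma_t$, parametrized by $g$-arclength, which realizes $\ell(q,\Sigma_t)$. By Lemma~\ref{Lemma: blowup of initial tangents}, the vectors $\gamma'_{(t)}(0)$ satisfy $|\gamma'_{(t)}(0)|_h \le C$. They are also $g$-unit future timelike, so they lie in a compact subset of the future unit hyperboloid bundle over a neighborhood of $q$. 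Choose a subsequence with $\gamma'_{(t_n)}(0)\to v$, where $v$ is timelike and $|v|_g=1$. Let $\alpha(s)=\exp_q(sv)$ on its maximal domain $[0,a)$.

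We claim $\alpha$ is a timelike ray, meaning it maximizes $\ell$ between any two of its points. Since $\tau$ is Cauchy, $\ell(q,\Sigma_t)\to\infty$ as $t \to \infty$, so the lengths of the $\gamma_{(t_n)}$ tend to infinity. On each compact subinterval of $[0,a)$, the $\gamma_{(t_n)}$ converge to $\alpha$ by ODE continuity. Segments of maximizers are maximizers, and $\ell$ is continuous in a globally hyperbolic spacetime, so the limit $\alpha$ is maximizing. Moreover $\alpha$ is future inextendible: if it stayed in a compact set it would extend, and the limit curve theorem applied to the maximizers, whose $\tau$-values go to infinity, shows $\tau\circ\alpha\to\infty$. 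Completeness of $\alpha$ is not needed.

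\textbf{Step 2: a line via base points going to the past.} Take $q_k \in \Sigma_{-k}$. The hard part is controlling the base points. Local equi-Lipschitzness only gives a bound $C$ on a neighborhood of each point, so the bound need not be uniform over $\Sigma_{-k}$ as $k$ grows. To get around this, build a ray $\alpha_k$ from $q_k$ as in Step 1 and look at where it crosses the fixed compact surface $\Sigma_0$, at a point $p_k$ with velocity $\alpha_k'$ there. Compactness of $\Sigma_0$ gives $p_k \to p$ along a subsequence. What we need is to bound $|\alpha_k'|_h$ at $p_k$.

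To get that bound, note that the restriction of a ray from $q_k$ to the part after $p_k$ is itself a limit of maximizers from $p_k$ to $\Sigma_{t_n}$. This holds because maximizers to a surface restrict to maximizers to that surface. Applying Lemma~\ref{Lemma: blowup of initial tangents} with a single constant $C$ valid on a fixed finite cover of the compact $\Sigma_0$ bounds the tangents at $p_k$ uniformly.

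To make this rigorous, build $\alpha_k$ directly as a limit of the tails starting at $p_{k,n}\in\Sigma_0$ of the maximizers from $q_k$ to $\Sigma_{t_n}$, and use the equi-Lipschitz bound at $p_{k,n}$. A diagonal argument then gives a limit direction at $p_k$.

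\textbf{Step 3: the limit is a line.} Pass to $\alpha_k'(p_k)\to w$, with $w$ timelike, and let $\beta$ be the inextendible geodesic through $p$ with velocity $w$. Any two points of $\beta$ are limits of pairs of points on $\alpha_k$, and for large $k$ both lie on the maximizing portion of $\alpha_k$, since $q_k\in\Sigma_{-k}$ with $k\to\infty$. Continuity of $\ell$ then shows that $\beta$ is maximizing between any two of its points. It is inextendible in both directions because it crosses every $\Sigma_s$. Hence $\beta$ is a timelike line, not necessarily complete.

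We expect the main obstacle to be the uniformity in Step 2, that is, making sure the $h$-bound is applied at points of the fixed compact $\Sigma_0$ and not at the receding base points $q_k$.
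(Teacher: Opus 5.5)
Your argument is correct in substance. Its key observation is the same as the paper's: the bound of Lemma~\ref{Lemma: blowup of initial tangents} is applied where the curves cross the fixed compact surface $\Sigma_0$. This is legitimate because the tail of a maximizer to $\Sigma_t$ is again a maximizer to $\Sigma_t$ from each of its points, and a finite cover of $\Sigma_0$ gives one constant $C$.

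The difference is organizational. You take a double limit: first $n\to\infty$ to get rays $\alpha_k$ from base points $q_k\in\Sigma_{-k}$, then $k\to\infty$. The paper takes a single limit of $g$-unit speed maximizers $\gamma_n$ from $\Sigma_{-n}$ to $\Sigma_n$. These exist by compactness and are automatically maximizing between any two of their points. Their tails from $z_n=\gamma_n(0)\in\Sigma_0$ maximize to $\Sigma_n$, so one subsequence plus ODE continuity gives the line. In particular, your Step 1 (the bound at $q$ itself) is never needed. Choosing $n=n(k)$ diagonally would collapse your Steps 2--3 into the paper's one-limit argument. What your version buys is a timelike ray from every point as a by-product, and explicit bookkeeping of parameter domains, which the paper leaves implicit.

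Two auxiliary claims in Step 1 are false as stated, although nothing essential depends on them.

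\begin{enumerate}
\item $\ell(q,\Sigma_t)\to\infty$ does not follow from $\tau$ being Cauchy. The lemma deliberately allows incomplete lines. On the slab $(0,1)\times S$ of a static product $dt^2-\tilde g$ with $\tau=\log\frac{t}{1-t}$, one has $\ell(q,\Sigma_s)\le 1-t(q)<1$ for all $s$.
\item Compact Cauchy surfaces do not force $\Sigma_t\subseteq I^+(q)$ for large $t$. In de Sitter space, event horizons prevent this for every $q$ and every $t$.
\end{enumerate}

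What is actually needed, both for the existence of a maximizer and in Lemma~\ref{Lemma: blowup of initial tangents}, is only $U\subseteq I^-(\Sigma_t)$; taking $t>\sup_U\tau$ suffices. To see that a compact piece $[a,b]$ of the limit geodesic lies in the maximizing portion of the approximants, do not use lengths going to infinity. Instead compare $\tau$-values: along the approximants, $\tau$ at $a$ and $b$ converges to finite limits, while the approximants end on $\Sigma_{t_n}$ (and, in Step 3, start on $\Sigma_{-k}$) with $t_n,k\to\infty$. This is exactly the argument you already give for the past direction in Step 3.
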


\begin{proof}
    Let us assume $\{\ell(\cdot,\Sigma_t)\}_{t > 0}$ is locally equi-Lipschitz, the other case is analogous. 
    
    Compactness yields a $g$-unit speed maximizing timelike geodesic $\gamma_n$ connecting $\Sigma_{-n}$ to $\Sigma_n$. Moreover, $\gamma_n$ intersects $\Sigma_0$ uniquely in a point $z_n$ (normalized so that $z_n = \gamma_n(0)$). By compactness, upon passing to a subsequence, we may assume that $z_n \to z \in \Sigma_0$. By assumption, since $\gamma_n$ also maximizes from $z_n$ to $\Sigma_n$, the local equi-Lipschitzness of $\{\ell(\cdot,\Sigma_t)\}_{t > 0}$ implies (cf.\ Lemma \ref{Lemma: blowup of initial tangents}) that the sequence $\{\gamma'_n(0)\}_{n \in \N}$ is contained in a compact subset of the future unit timelike bundle of $M$, thus, after passing to a further subsequence, $\gamma_n'(0) \to v \in T_{z}M$ and $g(v,v) = 1$. By standard ODE theory, $\gamma_n \to \gamma$ in $C^\infty_{loc}$, where $\gamma$ is the maximally extended geodesic with initial data $\gamma(0) = z$ and $\gamma'(0) = v$. As all of the $\gamma_n$ are maximizing, $\gamma$ is thus a timelike line.
\end{proof}

\begin{Example}
    In \cite{EhrlichGallowayTimelikelines}, Ehrlich--Galloway construct a timelike (and null) geodesically complete, globally hyperbolic spacetime with compact Cauchy surfaces which does not satisfy the strong energy condition and which contains no timelike lines. Thus, by the previous result, there is no Cauchy temporal function $\tau$ on this spacetime (no matter if ($h$-)steep or not) for which either of the families $\{\ell(\cdot,\Sigma_t)\}_{t > 0}$ or $\{\ell(\Sigma_{-t},\cdot)\}_{t > 0}$ is locally equi-Lipschitz.
\end{Example}

Let us note the following connection of the equi-Lipschitz property of the families of Lorentz distances with Bartnik's cosmological splitting conjecture:

\begin{Corollary}
\label{P:old}
    Let $(M,g)$ be a globally hyperbolic, timelike geodesically complete spacetime with compact Cauchy surfaces satisfying the strong energy condition. Then $(M,g)$ splits isometrically as a Lorentzian product if and only if there exists a Cauchy temporal function $\tau \in C^\infty(M)$ such that either of the families $\{\ell(\cdot,\Sigma_t)\}_{t > 0}$ or $\{\ell(\Sigma_{-t},\cdot)\}_{t > 0}$ is locally equi-Lipschitz.
\end{Corollary}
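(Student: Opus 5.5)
Both implications reduce to results already available: Lemma \ref{L:equi-Lipschitz implies splitting} together with the classical Lorentzian splitting theorem for one direction, and an explicit computation in a product spacetime for the other.

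\emph{($\Leftarrow$) Equi-Lipschitz implies splitting.} Suppose $\tau$ is a Cauchy temporal function for which, say, $\{\ell(\cdot,\Sigma_t)\}_{t>0}$ is locally equi-Lipschitz. By Lemma \ref{L:equi-Lipschitz implies splitting}, $M$ contains a timelike line $\gamma$, namely the $C^\infty_{loc}$-limit of the maximizers $\gamma_n$ from $\Sigma_{-n}$ to $\Sigma_n$.
\begin{enumerate}
\item Since $(M,g)$ is timelike geodesically complete, $\gamma$ is defined on all of $\R$. It is maximizing on every compact subinterval, being a limit of maximizers, so it is a complete timelike line.
\item The hypotheses of the Lorentzian splitting theorem \cite{Eschenburg1988, Galloway1989, Newman1990} now hold: timelike geodesic completeness (or global hyperbolicity), the strong energy condition, and a complete timelike line.
\item Hence $(M,g)\cong(\R\times S, dt^2-\tilde g)$. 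Compactness of $S$ follows from compactness of the Cauchy surfaces, although it is not needed.
\end{enumerate}

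\emph{($\Rightarrow$) Splitting implies equi-Lipschitz.} If $(M,g)=(\R\times S, dt^2-\tilde g)$ with $S$ compact, take $\tau=t$, so $\Sigma_s=\{s\}\times S$.
\begin{enumerate}
\item For any $q=(t_0,x)$ with $t_0<s$, the time separation is $\ell(q,\Sigma_s)=s-t_0$. The vertical segment $r\mapsto(r,x)$ realizes this value.
\item Any causal curve from $q$ to $\Sigma_s$ has length at most $s-t_0$, since $|\dot\sigma|_g\le \dot t$ in a product metric.
\item So each $\ell(\cdot,\Sigma_s)$ coincides with the function $(t,x)\mapsto s-t$ on $\{t<s\}$, whose gradient is a fixed unit timelike vector.
\item On any compact set $K$, once $s$ exceeds $\max_K t$, these functions are all Lipschitz with the same constant, namely the $h$-norm bound of $dt$ on $K$.
\end{enumerate}
Equivalently, via Lemma \ref{Lemma: blowup of initial tangents}, the initial tangents of the maximizers are always $\partial_t$, whose $h$-norm is bounded on compacta independently of $s$. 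The past family $\{\ell(\Sigma_{-t},\cdot)\}_{t>0}$ is handled symmetrically.

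\emph{Main obstacle.} The only delicate point is the reverse direction. The maximizers from $q$ to $\Sigma_s$ must be exactly the vertical segments, with no other competitors. This is a direct computation in product spacetimes, so I expect no real difficulty. Beyond that, I only need to confirm that the line produced by Lemma \ref{L:equi-Lipschitz implies splitting} is defined on all of $\R$, which timelike geodesic completeness provides.
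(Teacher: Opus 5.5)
Your proposal is correct and follows the paper's argument. For the `if' direction you use Lemma \ref{L:equi-Lipschitz implies splitting}, with timelike geodesic completeness making the line complete so that the classical splitting theorem applies. For the `only if' direction you do the elementary check with $\tau=t$ in the product, which the paper leaves implicit; it points instead to Proposition \ref{P:compact splitting implies equi-Lipschitz} for the stronger fact that every Cauchy temporal function works.
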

\begin{proof}
The `if' statement follows from Lemma \ref{L:equi-Lipschitz implies splitting}. 
The `only if' statement is elementary to verify since in this case the geometrical splitting is assumed;  (for a stronger conclusion under a stronger hypothesis,  see Proposition \ref{P:compact splitting implies equi-Lipschitz} below).
\end{proof}

\section{Outlook}
\label{section: outlook}

In light of the relationship between equi-Lipschitzness of Lorentz distances to Cauchy surfaces and Bartnik's splitting conjecture, let us formulate the following conjecture which is equivalent to Bartnik's:

\begin{Conjecture}
    Let $(M,g)$ be a globally hyperbolic, timelike geodesically complete spacetime with compact Cauchy surfaces which satisfies the strong energy condition. Then there exists a Cauchy temporal function $\tau \in C^\infty(M)$ such that one of the families $\{\ell(\cdot,\Sigma_t)\}_{t > 0}$ or $\{\ell(\Sigma_{-t},\cdot)\}_{t > 0}$ is locally equi-Lipschitz, where $\Sigma_t := \{\tau =t\}$.
\end{Conjecture}

The counterexamples we gave to the equi-Lipschitzness of these families were (1) in a setting where the strong energy condition holds but the Cauchy surfaces are not compact (Minkowski), and (2) in a setting where Cauchy surfaces are compact but the strong energy condition does not hold (the example by Ehrlich--Galloway). In light of this, we formulate the following conjecture, which would imply our previous one:

\begin{Conjecture}
    Let $(M,g)$ be a globally hyperbolic, timelike geodesically complete spacetime with compact Cauchy surfaces which satisfies the strong energy condition. Then for every Cauchy temporal function $\tau \in C^\infty(M)$ both of the families $\{\ell(\cdot,\Sigma_t)\}_{t > 0}$ or $\{\ell(\Sigma_{-t},\cdot)\}_{t > 0}$ are locally equi-Lipschitz.
\end{Conjecture}

To see that this is not unreasonable, let us prove the following:

\begin{Proposition}\label{P:compact splitting implies equi-Lipschitz}
    Let $(M,g) = (\R \times S, dt^2 - \tilde g)$ be a product spacetime, where $(S,\tilde g)$ is a compact Riemannian manifold. Then for every Cauchy temporal function $\tau \in C^\infty(\R \times S)$, the families $\{\ell(\cdot,\Sigma_t)\}_{t > 0}$ and $\{\ell(\Sigma_{-t},\cdot)\}_{t > 0}$ are locally equi-Lipschitz.
\end{Proposition}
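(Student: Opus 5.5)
The plan is to verify the criterion of Lemma \ref{Lemma: blowup of initial tangents} directly, using the explicit form of geodesics in the product $(\R \times S, dt^2 - \tilde g)$ together with the compactness of $S$. Since the local equi-Lipschitz property does not depend on the choice of complete background metric (any two are uniformly equivalent on compact sets), we take $h = dt^2 + \tilde g$. Let $D := \mathrm{diam}(S,\tilde g) < \infty$. We treat the family $\{\ell(\cdot,\Sigma_t)\}_{t>0}$; the past family follows by reversing the time orientation ($t \mapsto -t$, $\tau \mapsto -\tau$).

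\emph{Step 1: the level sets escape uniformly to the future.} Fix $K > 0$. By continuity, $\tau$ is bounded above on the compact set $[-K,K] \times S$ by some constant $c_K$. Since $\partial_t$ is future timelike, $\tau$ is strictly increasing along each line $\R \times \{y\}$. Hence for $t > c_K$, every point $(a,y) \in \Sigma_t$ satisfies $a > K$: if $a \le K$, then $\tau(a,y) \le \tau(\max(a,-K),y) \le c_K < t$, which is impossible. So $m_t := \min\{a : (a,y) \in \Sigma_t\} \to \infty$ as $t \to \infty$.

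\emph{Step 2: maximizers and their tangents.} Let $U = (t_1,t_2) \times S$, and take $t$ large enough that $m_t \ge t_2 + 2D$. Then for $q = (t_0,x_0) \in U$ and every $(a,y) \in \Sigma_t$ we have $a - t_0 \ge 2D \ge 2\,d_{\tilde g}(x_0,y)$, so $\Sigma_t \subseteq I^+(q)$. Moreover $J^+(q) \cap \Sigma_t$ is compact, so a maximizer from $q$ to $\Sigma_t$ exists. This maximizer is a timelike geodesic from $q$ to some $(a,y) \in \Sigma_t$. In the product, a maximizing timelike geodesic is $s \mapsto (t_0 + sT,\ \sigma(s))$, where $T = a - t_0$ and $\sigma$ is a minimizing $\tilde g$-geodesic from $x_0$ to $y$ of length $L = d_{\tilde g}(x_0,y) \le D$. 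After $g$-unit normalization the initial tangent is
\[
\gamma'_{(t)}(0) = \frac{T\,\partial_t + L\, e}{\sqrt{T^2 - L^2}}, \qquad |e|_{\tilde g} = 1 .
\]
Its $h$-norm is therefore $\sqrt{(T^2+L^2)/(T^2-L^2)}$. Since $T \ge 2D \ge 2L$, this is at most $\sqrt{5/3}$.

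\emph{Step 3: conclusion.} The bound in Step 2 is independent of $q \in U$ and of $t$, so Lemma \ref{Lemma: blowup of initial tangents} gives local equi-Lipschitzness on each such $U$, and these sets cover $M$. The only step needing some care is Step 1, namely the uniform escape of $\Sigma_t$ relative to the product time coordinate. This is exactly where compactness of $S$ enters, twice: it makes $\tau$ bounded on $[-K,K]\times S$, and it bounds the spatial displacement $L$ by $D$. Everything else is the explicit product geodesic structure.
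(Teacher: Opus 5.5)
Your proof is correct, but it reaches the bound differently from the paper.

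The paper writes $\Sigma_s$ as the graph of a $1$-Lipschitz function $f_s$ over $S$; this is where spacelikeness of the level sets enters, and it gives $f_s \ge M_s - D$. It then uses the maximality of $\gamma_s$ against a competitor ending elsewhere on $\Sigma_s$, namely the vertical segment from $p$. This yields $M_s - D - t_0 \le (M_s - t_0)\sqrt{1-v_s^2}$, and letting $M_s \to \infty$ forces the spatial speed $v_s \to 0$.

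You never compare with such a competitor. You use only two facts:
\begin{itemize}
\item The maximizer maximizes between its own endpoints, so its spatial projection is a minimizing $\tilde g$-geodesic of length at most $D$.
\item The whole level set escapes uniformly, $m_t \to \infty$. You derive this from boundedness of $\tau$ on the compact slabs $[-K,K]\times S$ and monotonicity of $\tau$ along vertical lines, rather than from the graph and oscillation structure.
\end{itemize}

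What your route buys is an explicit constant $\sqrt{5/3}$ with an explicit threshold $m_t \ge t_2 + 2D$, and no limiting argument. It also recovers the paper's $v_s \to 0$ as a byproduct, since every maximizing geodesic from $q \in U$ to any point of $\Sigma_t$ has slope $L/T \le D/(m_t - t_0) \to 0$. Your argument uses less about $\Sigma_t$: no spacelike-graph structure and no variational comparison within $\Sigma_t$. It relies instead on the product geometry of geodesics plus uniform escape. The paper's argument, by contrast, works directly with the characterization of $\ell(p,\Sigma_s)$ as a supremum over $\Sigma_s$.
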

\begin{proof}
    We show this for $\{\ell(\cdot,\Sigma_t)\}_{t > 0}$, the case of the other family is analogous.

    Given any $s \in \R$, since $\Sigma_s$ is a smooth spacelike acausal Cauchy hypersurface it is easily seen that it must be a global graph over $S$, i.e.,
    \begin{equation}
        \Sigma_s = \{(f_s(x),x) \in \R \times S : x \in S\}
    \end{equation}
    for some smooth function $f_s \in C^\infty(S)$. The fact that $\Sigma_s$ is spacelike translates to the condition $|df_s|_{\tilde g} < 1$, i.e., $f_s$ is $1$-Lipschitz on $S$. Let $D:=\mathrm{diam}(S) < +\infty$ (by compactness), then
    \begin{equation}
        \max_S f_s - \min_S f_s \leq D <+\infty
    \end{equation}
    for every $s \in \R$. We write $M_s:=\max_S f_s$, so that $f_s \geq M_s - D$. Fix a compact set $K \subseteq \R \times S$ and take any $p:=(t_0,x_0) \in K$. For $s$ large enough, $K \subseteq I^-(\Sigma_s)$. Connect $p$ to $\Sigma_s$ by the vertical segment $\alpha:t \mapsto (t + t_0,x_0)$. This segment uniquely meets $\Sigma_s$ at $t = f_s(x_0)$, so that its Lorentzian arclength 
    $ L_g(\alpha)$ is
    \begin{equation}
       \ell(p,\Sigma_s) \geq L_g(\alpha) = f_s(x_0) - t_0 \geq M_s - D - t_0.
    \end{equation}
    Let $\gamma_s(t) = (t,c_s(t))$ be any timelike geodesic realizing $\ell(p,\Sigma_s)$, so that $v_s:=| c'_s|_{\tilde g} < 1$. This geodesic intersects $\Sigma_s$ at some $(f_s(y_s),y_s)$, so that
    \begin{equation}
        \ell(p,\Sigma_s) = L_g(\gamma_s) = (f_s(y_s) - t_0) \sqrt{1 - v_s^2} \leq (M_s - t_0) \sqrt{1 - v_s^2}.
    \end{equation}
    In total,
    \begin{equation}
        M_s - D - t_0 \leq (M_s - t_0) \sqrt{1 - v_s^2}.
    \end{equation}
    Note that $M_s \to +\infty$ as $s \to +\infty$, and moreover $t_0$ remains uniformly bounded for $p = (t_0,x_0) \in K$ since the latter is compact. Thus, dividing by $M_s$ and sending $s \to \infty$, we get $1 \leq \liminf_{s \to \infty} \sqrt{1 - v_s^2}$, so that $\lim_{s \to \infty} v_s^2 \to 0$. This shows that $|\gamma_s'(0)|_h^2 = 1 + v_s^2$ remains uniformly bounded in $s$, where $h:=dt^2 + \tilde g$, thus we conclude the proof upon recalling Lemma \ref{Lemma: blowup of initial tangents}.
\end{proof}

The conjectures formulated above provide a different perspective on Bartnik's conjecture, which could be helpful in its study. In light of the previous result, they are equivalent to each other and to Bartnik's conjecture itself.

\section*{Data availability statement}

There is no data associated with this manuscript.

\section*{Conflict of interest statement}

The authors confirm that no conflict of interest exists for this work.

\section*{Acknowledgments}

Greg Galloway would like to thank Eric Ling and Carlos Vega for helpful discussions.

Robert McCann's work was supported in part by the Canada Research Chairs program CRC-2020-00289 and Natural Sciences and Engineering Research Council of Canada Discovery Grants RGPIN--2020--04162 and 2026-04906. 

This research was funded in part by the Austrian Science Fund (FWF) [Grants DOI \href{https://doi.org/10.55776/EFP6}{10.55776/EFP6} and \href{https://doi.org/10.55776/J4913}{10.55776/J4913}]. For open access purposes, the authors have applied a CC BY public copyright license to any author accepted manuscript version arising from this submission.

\addcontentsline{toc}{section}{References}
\bibliography{Bibliography} 
\bibliographystyle{abbrv}

\end{document}